\documentclass[11pt,letterpaper]{amsart}

\usepackage{tikz}
\tikzstyle{every node}=[circle, draw, fill=black!50,
                        inner sep=0pt, minimum width=3pt]

\usepackage{amsfonts,amsmath,latexsym,color,epsfig,hyperref,enumitem, amssymb,bbm}

	\addtolength{\oddsidemargin}{-.875in}
	\addtolength{\evensidemargin}{-.875in}
	\addtolength{\textwidth}{1.75in}

	\addtolength{\topmargin}{-.50in}
	\addtolength{\textheight}{1in}

\newtheorem{theorem}{Theorem}%[section]
\newtheorem{lemma}{Lemma}%[section]

\newtheorem{prop}[lemma]{Proposition}

\renewcommand{\le}{\leqslant}
\renewcommand{\ge}{\geqslant}

\def\qed{\ifvmode\mbox{ }\else\unskip\fi\hskip 1em plus 10fill$\Box$}

\input{epsf}
%usage: \fig{LABEL}{FIGURE-SIZE}{CAPTION}{FILENAME}

\makeatletter
\def\Ddots{\mathinner{\mkern1mu\raise\p@
\vbox{\kern7\p@\hbox{.}}\mkern2mu
\raise4\p@\hbox{.}\mkern2mu\raise7\p@\hbox{.}\mkern1mu}}
\makeatother

\def\Z{\mathbb Z}

\def\N{\mathbb N}
\def\E{\mathbb E}

\title{ Most integers are not a sum of two palindromes}
\author{Dmitrii Zakharov}
\thanks{Zakharov's research was supported by the Jane Street Graduate Fellowship.}
\address{Department of Mathematics, Massachusetts Institute of Technology, Cambridge, MA 02139, USA}
\email{zakhdm@mit.edu}

\date{}

\begin{document}

\begin{abstract}
For $g \ge 2$, we show that the number of positive integers at most $X$ which can be written as sum of two base $g$ palindromes is at most $\frac{X}{\log^c X}$. This answers a question of Baxter, Cilleruelo and Luca.
\end{abstract}

\maketitle

Fix an integer $g \ge 2$. Every positive integer $a \in \N$ has a base $g$ representation, i.e. it can be uniquely written as
\begin{equation}\label{eq1}
a = \overline{a_n a_{n-1} \ldots a_0} = \sum_{i=0}^n g^i a_i, ~ \text{ where }a_i \in \{0, 1, \ldots, g-1\} \text{ and }a_n\neq 0.    
\end{equation}
A number $a \in \N$ with representation (\ref{eq1}) is called {\em a base $g$ palindrome} if $a_i = a_{n-i}$ holds for all $i=0, \ldots, n$. Baxter, Cilleruelo, and Luca \cite{Cilleruelo} studied additive properties of the set of base $g$ palindromes. Improving on a result of Banks \cite{Banks}, they showed that every positive integer can be written as a sum of three palindromes (provided that $g\ge 5$). They also showed that the number of integers at most $X$ which are sums of two palindromes is at least $X e^{-c_1\sqrt{\log X}}$ and at most $c_2 X$, for some constants $c_1 > 0$ and $c_2 < 1$ depending on $g$, and asked whether a positive fraction of integers can be written as a sum of two base $g$ palindromes. This was later reiterated by Green in his list of open problems as Problem 95. We answer this question negatively:

\begin{theorem}\label{main}
    For any integer $g \ge 2$ there exists a constant $c > 0$ such that
    $$
    \# \{ n < X:~ n \text{ is a sum of two base }g\text{ palindromes} \} \le \frac{X}{\log^{c} X},
    $$
    for all large enough $X$.
\end{theorem}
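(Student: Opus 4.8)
The plan is to fix $X = g^D$ and bound the number of distinct integers $c = a+b < X$ with $a,b$ base-$g$ palindromes. Writing $\ell(\cdot)$ for the number of base-$g$ digits, I would assume $a \ge b$, record the lengths $D_a := \ell(a) \ge D_b := \ell(b)$ and the gap $k := D_a - D_b \ge 0$, and organise the count by the pair $(D_a, D_b)$. The only elementary input is the trivial bound: the number of palindromes of length $\ell$ is $(g-1)g^{\lceil \ell/2\rceil -1} \le g^{\lceil \ell/2\rceil}$, so the number of sums with fixed lengths is at most $g^{\lceil D_a/2\rceil + \lceil D_b/2\rceil} \le g\,g^{(D_a+D_b)/2} = g\,g^{D_a - k/2}$.

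Set a threshold $T := \lceil 3\log_g \log X\rceil$, so $g^{T} \ge (\log X)^{3}$, and dispose of two ranges for free. First, if $D_a \le D - T$ then $c < 2g^{D_a} \le 2Xg^{-T}$, giving at most $2X(\log X)^{-3}$ such $c$. Second, if $D_a > D-T$ but $k \ge T$, then summing the trivial bound over the fewer than $T$ values of $D_a$ and over $k \ge T$ gives a convergent geometric series dominated by $gT\,g^{D - T/2} = gTXg^{-T/2}$, which is $\le X(\log X)^{-1}$ for large $X$. Hence it remains to treat the regime in which both lengths lie in $(D-2T, D]$ and $0 \le k < T$; there are only $O(T^2)=O((\log\log X)^2)$ such length pairs, so it suffices to beat the trivial bound by a fixed power of $\log X$ for each.

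The equal-length case $k=0$ is the clean heart of the phenomenon and already yields a power saving. If $a,b$ both have length $N$, the digit-sum sequence $s_i := a_i+b_i \in \{0,\dots,2g-2\}$ is symmetric, $s_i = s_{N-1-i}$, and $c = \sum_i s_i g^i$; thus $c$ is determined by $s_0,\dots,s_{\lceil N/2\rceil -1}$, giving at most $(2g-1)^{\lceil N/2\rceil} \le (2g-1)^{D/2+1}$ distinct sums. Since $2g-1 < g^2$, this is $X^{\theta}$ with $\theta = \tfrac12\log_g(2g-1) < 1$, far stronger than required. The genuine obstacle is the small positive gap $1 \le k < T$, where the trivial bound is $\approx Xg^{-k/2}$ — only a constant factor below $X$ — so I must extract a further $(\log X)^{c}$, i.e. about $\log_g\log X$ redundant base-$g$ digits.

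For such $k$ the symmetry is broken but controllably: with $a$ symmetric about $(N-1)/2$ and $b$ about $(N-k-1)/2$, one finds $s_i - s_{N-1-i} = b_i - b_{i-k}$, and reversing the digit-sum sequence gives the exact identity
$$\sum_i s_{N-1-i}\,g^i \;=\; a + g^k b,$$
so that the reversed sum $c^{*} := a+g^{k}b$ obeys $c^{*}-c = (g^{k}-1)b$ and $g^{k}c - c^{*} = (g^{k}-1)a$. The plan is to exploit this to show that near-equal-length sums are highly non-injective: a representable $c$ should admit $\gg (\log X)^{c}$ representations, produced by carry-compensated rewritings of the non-canonical sequence $s$ across positions where $s_i \ge g$, while repairing the palindromic symmetry of $a$ and the shifted symmetry of $b$. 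Counting these collisions — and proving enough of them survive the rigidity forced by the two palindrome constraints — is the delicate point and is exactly where carries, rather than the idealised carry-free model (in which the count is $\Theta(X)$ and the theorem is \emph{false}), must enter decisively. This carry-collision estimate for $1 \le k < T$ is the main obstacle; the remainder is the bookkeeping above, with the $O((\log\log X)^2)$ length pairs absorbed into a slightly larger saving exponent.
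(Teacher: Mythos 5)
Your reductions are sound, and your equal-length argument is correct and elegant: for $a,b \in P_N$ the digit-sum sequence $s_i = a_i + b_i$ satisfies $s_i = s_{N-1-i}$, so $c$ is determined by half of it, giving at most $(2g-1)^{\lceil N/2 \rceil} \le X^{\theta}$ sums with $\theta = \tfrac12 \log_g(2g-1) < 1$ --- in fact a stronger saving for $k=0$ than the paper itself obtains. But the proposal has a genuine gap exactly where you flag it: the regime $1 \le k < T$, i.e.\ length gap up to roughly $\log\log X$, is the entire content of the theorem, and there you offer only a plan (``carry-compensated rewritings \ldots proving enough of them survive the rigidity forced by the two palindrome constraints''), not an argument. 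Nothing in the proposal shows that a representable $c$ with small positive gap admits many representations, nor supplies any other mechanism for a power-of-$\log$ saving; as written, the theorem is established only for sums of two palindromes of equal length, and the identity $c^{*} - c = (g^k-1)b$ you derive is not by itself used to count anything.

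For comparison, the paper closes precisely this case (uniformly in $0 \le d \lesssim \log\log X$) with a concrete substitution device rather than an analysis of the carry process. It exhibits four digit strings of length $d+2$, namely $a = \overline{1 0 \cdots 0 1}$, $b = \overline{0 \cdots 0}$, $a' = \overline{0 \ell \cdots \ell 0}$, $b' = \overline{0 \cdots 0 1 1}$ with $\ell = g-1$, satisfying both $a+b = a'+b'$ and the reversed identity $g^d r(a) + r(b) = g^d r(a') + r(b')$. Hence if $p \in P_n$, $q \in P_{n-d}$ carry the patterns $a$ and $b$ on an aligned block (and therefore $r(a)$, $r(b)$ on the mirror block), one may swap in $a'$, $b'$ (and $r(a')$, $r(b')$) to produce a new pair of palindromes with the same sum. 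A Chernoff bound shows that all but an $\exp\left(-\Omega\left(n g^{-2d-4}/(d+2)\right)\right)$-fraction of pairs $(p,q)$ admit $\Omega\left(n g^{-2d-4}/(d+2)\right)$ disjoint swap sites, so a typical sum has $2^{\Omega\left(n g^{-2d-4}/(d+2)\right)}$ representations, forcing $|P_n + P_{n-d}|$ to be smaller than $|P_n||P_{n-d}|$ by that exponential factor; this is the source of the $(\log X)^{-c}$ saving. Your instinct that carries must enter is vindicated by this construction --- the identity $a'+b'=a+b$ works only because $\ell + 1 = g$ produces a carry --- but the carries are packaged into a fixed local identity that manifestly preserves both palindromicity and the sum, which is exactly what your sketch lacks. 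Inserting this swap construction as the missing collision estimate for $1 \le k < T$ would complete your route.
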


It is an interesting open problem to close the gap between this result and the lower bound of Baxter, Cilleruelo and Luca \cite{Cilleruelo}. We now proceed to the proof.

For $n \ge 1$, let $P_n$ be the set of base $g$ palindromes with exactly $n$ digits and $P = \bigcup_{n\ge 1}P_n$ be the set of all base $g$ palindromes. Note that 
$$
|P_n| = \begin{cases}
    g^{n/2}-g^{n/2-1}, ~ n\text{ is even,}\\
    g^{(n+1)/2}-g^{(n-1)/2}, ~n\text{ is odd.}
\end{cases}
$$
For an integer $N \ge 1$, we write $[N] = \{0, 1, \ldots, N-1\}$. For $A, B \subset \Z$ we let $A+B = \{a+b, ~a\in A, ~b \in B\}$ denote the sumset of $A$ and $B$.
Let $k \ge 1$ be sufficiently large and let $X = g^k$, it is enough to consider numbers $X$ of this form only.  With this notation, our goal is to upper bound the size of the intersection $(P + P) \cap [X]$.
We have 
$$
(P + P) \cap [X] = \bigcup_{k \ge n \ge m \ge 1} (P_n + P_m) \cap [X]
$$
and so we can estimate
\begin{equation}\label{eqsum}
|(P+P) \cap [X]| \le \sum_{k \ge n \ge m \ge 1} |P_n + P_m|.    
\end{equation}
We have $|P_n| \le g^{(n+1)/2}$, $|P_{m}| \le g^{ (m+1)/2}$ so using the trivial bound $|P_n+P_m| \le |P_n| |P_m|$ we can immediately get rid of the terms where $m$ is small:
\begin{equation}\label{eqcrap}
\sum_{\substack{k \ge n \ge m \ge 1\\ m \le k-\gamma \log k}} |P_n + P_m| \le \sum_{k \ge n \ge 1} |P_n| \cdot \sum_{m \le k-\gamma \log k} |P_m| \le     
\end{equation}
$$
\le  \sum_{k \ge n \ge 1} |P_n| \cdot 4 g^{(k+1)/2 -\gamma \log k/2} \le 16 g^{k+1 - \gamma \log k /2} \lesssim \frac{X}{k^{\gamma \log g/2}} \sim \frac{X}{(\log X)^{\gamma \log g/2}},
$$
where $\gamma > 0$ is a small constant which we will choose. 
Now we focus on a particular sumset $P_n + P_m$ from the remaining range. Write $m = n-d$ for some $d \ge 0$. 

For an integer $a = \overline{a_n \ldots a_0} $ let $r(a) = \overline{a_0 \ldots a_n}$ be the integer with the reversed digit order in base $g$ (we allow some leading zeros here). For $d\ge 0$ define
$$
a = \overline{1 \underbrace{0\ldots 0}_{d}1 }, ~ b = \overline{0 \underbrace{0\ldots 0}_{d}0}, ~
a' = \overline{0 \underbrace{\ell \ldots \ell}_{d} 0}, ~ b' = \overline{\underbrace{0 \ldots 0}_{d} 11},
$$
where we denoted $\ell = g-1$. These strings are designed to satisfy the following:
\begin{equation}\label{eq2}
a + b = a' + b' ~\text{ and }~ g^{d} r(a) + r(b) = g^{d} r(a') + r(b').    
\end{equation}
Indeed, note that 
$$
a' = \sum_{i=1}^d g^{i}\ell = g^{d+1}-g = (g^{d+1}+1) + 0 - (g+1) = a+b-b'
$$
and
$$
g^d r(a') = g^d a' = g^{2d+1}-g^{d+1} = g^{d}(g^{d+1}+1) + 0 - (g^{d+1}+g^d) = g^d r(a) + r(b) - r(b').
$$
We claim that the fact that (\ref{eq2}) holds for some $a, b, a', b'$ forces the sumset $P_n + P_{n-d}$ to be small. Roughly speaking, whenever palindromes $p \in P_n$ and $q \in P_{n-d}$ contain strings $a$ and $b$ on the corresponding positions, we can swap $a$ with $a'$ and $b$ with $b'$ to obtain a new pair of palindromes $p' \in P_n$ and $q' \in P_{n-d}$ with the same sum $p'+q'=p+q$. A typical pair $(p, q)$ will have $\gtrsim C^{-d} n$ disjoint substrings $(a, b)$ and so we can do the swapping in $\gtrsim \exp(C^{-d} n)$ different ways. So a typical sum $p+q \in P_n + P_{n-d}$ has lots of representations and this means that the sumset has to be small. 

Denote $t = [\frac{n}{3(d+2)}]$. For $p = \overline{p_0 p_1 \ldots p_1 p_0} \in P_n$ and $q = \overline{q_0 q_1 \ldots q_1 q_0} \in P_{n-d}$ let $S(p, q)$ denote the number of indices $1 \le j \le t$ such that
\begin{equation}\label{eqp}
\overline{p_{(d+2) j + d+1} p_{(d+2)j+d} \ldots p_{(d+2)j+1} p_{(d+2)j}} = a,    
\end{equation}
\begin{equation}\label{eqq}
\overline{q_{(d+2) j + d+1} q_{(d+2)j+d} \ldots q_{(d+2)j+1} q_{(d+2)j}} = b,    
\end{equation}
i.e. the segments of digits of $p$ and $q$ in the interval $[(d+2)j, (d+2)j +d+1]$ are precisely $a$ and $b$. 

\begin{prop}\label{prop1}
    The number of pairs $(p, q) \in P_n \times P_{n-d}$ such that $S(p, q) \le \frac{t}{2 g^{2d+4} }$ is at most $\exp\left( - \frac{t}{8 g^{2d+4} } \right) |P_n| |P_{n-d}|$.
\end{prop}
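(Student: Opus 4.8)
The plan is to recognize $S(p,q)$, viewed as a function of a uniformly random pair $(p,q)\in P_n\times P_{n-d}$, as a sum of independent indicator variables, and to bound the probability that it is much smaller than its mean by the multiplicative Chernoff inequality. Since the number of pairs with $S(p,q)\le \frac{t}{2g^{2d+4}}$ equals this probability times $|P_n||P_{n-d}|$, the stated bound follows at once.

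First I would describe the uniform measure on $P_n$ explicitly. A palindrome $p\in P_n$ is determined by its free digits $p_0,\dots,p_{\lceil n/2\rceil-1}$, the remaining digits being forced by $p_i=p_{n-1-i}$; here the outermost digit lies in $\{1,\dots,g-1\}$ and the others range over $\{0,\dots,g-1\}$. Thus drawing $p$ uniformly from $P_n$ is the same as choosing these free digits independently and uniformly from their ranges, and likewise for $q\in P_{n-d}$ independently of $p$.

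Next I would check that the $t$ blocks are generic. The $j$-th block occupies the positions $(d+2)j,\dots,(d+2)j+d+1$, so the blocks are pairwise disjoint and all referenced positions lie in $[d+2,\tfrac{n}{3}+d+1]$. In the regime in which the proposition is applied we have $d\le \gamma\log k$ while $n\ge k-\gamma\log k$, so $d=o(n)$ and for large $n$ this interval sits strictly inside the free half of both $p$ and $q$, avoiding the single constrained digit at position $0$. Hence the digits entering each block are genuinely independent and uniform: writing $E^p_j$ (resp.\ $E^q_j$) for the event that the block of $p$ equals $a$ (resp.\ that the block of $q$ equals $b$), each has probability exactly $g^{-(d+2)}$, and the events $(E^p_j,E^q_j)$ for distinct $j$ involve disjoint sets of free digits of $p$ and of $q$.

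Setting $X_j=\e[E^p_j\cap E^q_j]$ we have $S(p,q)=\sum_{j=1}^t X_j$, a sum of independent Bernoulli variables with $\pr[X_j=1]=g^{-2d-4}$, so $\mu:=\E[S(p,q)]=t\,g^{-2d-4}$. The Chernoff lower-tail bound $\pr[S\le(1-\delta)\mu]\le\exp(-\delta^2\mu/2)$ with $\delta=\tfrac12$ then gives
$$
\pr\!\left[S(p,q)\le\frac{t}{2g^{2d+4}}\right]\le\exp\!\left(-\frac{\mu}{8}\right)=\exp\!\left(-\frac{t}{8g^{2d+4}}\right),
$$
and multiplying by $|P_n||P_{n-d}|$ yields the proposition. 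I expect the only real obstacle to be the bookkeeping of the middle step: one must verify, uniformly over the relevant $n$ and $d$, that every block position falls strictly among the free, unconstrained digits of both palindromes, since it is exactly this that makes the $X_j$ independent with the clean probability $g^{-2d-4}$. Once this is confirmed the rest is a routine application of Chernoff's inequality.
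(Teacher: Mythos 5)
Your proof is correct and follows essentially the same route as the paper: view $(p,q)$ as uniform on $P_n \times P_{n-d}$, observe that $S(p,q)$ is a sum of $t$ independent Bernoulli variables of mean $g^{-2(d+2)}$ (since the blocks occupy disjoint positions in $[d+2, n/3+d+1]$, well inside the free half of both palindromes in the relevant regime), and apply the multiplicative Chernoff lower-tail bound with $\delta = \tfrac12$. Your treatment is in fact slightly more careful than the paper's, which asserts the i.i.d.\ structure without spelling out the free-digit verification.
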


\begin{proof}
    Draw $(p, q)$ uniformly at random from $P_n \times P_{n-d}$. Then $S(p, q)$ is a sum of $t$ i.i.d Bernoulli random variables with mean $g^{-2(d+2)}$. So the expectation $\E_{p,q} S(p, q)$ is given by $\mu = t g^{-2(d+2)}$ and by Chernoff bound, 
    $$
    \Pr[S(p, q) \le \mu/2] \le \exp\left(- \mu / 8\right) =  \exp\left( - \frac{t}{8 g^{2d+4} } \right).
    $$
\end{proof}

Now we observe that for any $p=\overline{p_0 p_1 \ldots p_1 p_0} \in P_n$, $q = \overline{q_0 q_1 \ldots q_1 q_0} \in P_{n-d}$, the sum $s=p+q$ has at least $2^{S(p,q)}$ distinct representations $s = p'+q'$ for $(p', q') \in P_n \times P_{n-d}$. Indeed, let $j_1 < \ldots < j_u$ be an arbitrary collection of indices such that (\ref{eqp}) and (\ref{eqq}) hold for $j=j_1, \ldots, j_u$. Let $p'$ and $q'$ be obtained from $p$ and $q$ by replacing the $a$ and $b$-segments on positions $j_1, \ldots, j_u$ by $a'$ and $b'$ and replacing $r(a)$ and $r(b)$-segments on the symmetric positions by $r(a')$ and $r(b')$, respectively. Then we claim that $p' \in P_n$, $q' \in P_{n-d}$ and $p'+q'=p+q$. Indeed, more formally, we can write
$$
p' = p + \sum_{i=1}^u g^{(d+2) j_i} (a' - a) + g^{n-(d+2) j_i - d-1} (r(a') - r(a)),
$$
$$
q' = q + \sum_{i=1}^u g^{(d+2) j_i} (b' - b) + g^{(n-d)-(d+2) j_i - d-1} (r(b') - r(b)),
$$
and so (\ref{eq2}) implies that $p+q=p'+q'$. Since we can choose $j_1 < \ldots < j_u$ to be an arbitrary subset of $S(p,q)$ indices, we get $2^{S(p, q)}$ different representations $p+q=p'+q'$. 

Using this and Proposition \ref{prop1} we get
$$
    |P_n + P_{n-d}| \le \# \left\{p+q ~|~ S(p, q) \ge \frac{t}{2g^{2d+4}}\right\} + \# \left\{p+q ~|~ S(p, q) \le \frac{t}{2g^{2d+4}}\right\} \le 
$$
$$
\le 2^{- \frac{t}{2g^{2d+4}}} |P_n| |P_{n-d}| + \exp\left(- \frac{t}{8g^{2d+4}}\right) |P_n| |P_{n-d}| \le 2 \exp\left(- \frac{n}{30(d+2) g^{2d+4}}\right) |P_n| |P_{n-d}|.
$$
Using this bound we can estimate the part of (\ref{eqsum}) which was not covered by (\ref{eqcrap}):
$$
\sum_{k \ge n \ge m \ge k-\gamma \log k} |P_n + P_m| \le \sum_{k \ge n \ge k-\gamma \log k} \sum_{d = 0}^{\gamma \log k} |P_n + P_{n-d}| \le
$$
$$
\le \sum_{k \ge n \ge k-\gamma \log k} \sum_{d = 0}^{\gamma \log k} 2 \exp\left(- \frac{n}{30(d+2) g^{2d+4}}\right) |P_n| |P_{n-d}| \le \sum_{k\ge n \ge k-\gamma \log k} 2 \exp\left( - \frac{n}{k^{3 \gamma \log g}} \right) g^{k+1} 
$$
so if we take, say, $\gamma = \frac{1}{4 \log g}$ then this expression is less than, say, $k^{-1} g^k \lesssim \frac{X}{\log X}$ provided that $k$ is large enough. Combining this with (\ref{eqcrap}) gives $|(P+P) \cap [X]| \le \frac{X}{(\log X)^{0.1}}$ for large enough $X$ (the proof actually gives $1/4-\varepsilon$ instead of $0.1$ here).
\bibliographystyle{amsplain0.bst}
\bibliography{main}

\end{document}